\newtheorem{theorem}{Theorem}
\newtheorem{lemma}[theorem]{Lemma}
\def\RR{{\mathcal R}}
\def\HH{{\mathcal H}}
\def\KK{{\mathcal K}}
\begin{document}

\title
[resonances]{Exponential decay and resonances in a driven system}

\author{Philippe Briet*}
\address{*Universite de Toulon and CPT-CNRS.}
 \email{briet@univ-tln.fr}

\author{Claudio Fern\'andez**}
\address{**Pontificia Universidad Cat\'olica de Chile. Facultad de Matem\'aticas}
\email{cfernand@mat.puc.cl}

\subjclass[2000]{35P10, 35Q41, 81Q10.}


\keywords{Time dependent  perturbation, Schr\"odinger equation, resonances}

\begin{abstract}
We study the resonance phenomena for time periodic perturbations
of a Hamiltonian $H$ on the Hilbert space $L^2( \mathbb R ^d)$. Here, resonances are
characterized in terms of  time  behavior of  the survival probability. Our approach uses the Floquet-Howland
formalism combined with the results  of L. Cattaneo, J.M. Graf  and W. Hunziker on resonances  for time independent
perturbations.

\end{abstract}

\date{}

\maketitle

\section{Introduction}
Let $\{H(t)\}$ be a  periodic time dependent quantum Hamiltonian, that is, a
family of self--adjoint operators acting on a complex Hilbert space $\mathcal H$. These
operators describe   a quantum driven system whose states are given
by the solution of the Schr\"odinger equation:
\begin{equation}
(-{\hbar\over i}\partial_t+H(t))\psi(t)=0,\quad\psi(t=0)=\psi_0.
\end{equation}
The main question we want to address concerns the possibility of existence of
metastable  states  (resonances)  for such a system, in the case where $H(t)$ is a perturbation
of a free time independent Hamiltonian $H_0$ having  bound states.   

Here, we characterize the presence of a resonance
in a dynamical fashion, in terms of an approximate exponential behavior of the
associated time evolution.

We assume that $\hbar =1$ and consider the propagator $U(t,s); t,s >0$,
associated to the Schr\"odinger equation:
\begin{equation}
i{\partial \psi\over \partial t} =H(t)\psi(t).
\end{equation}
When $H_0 \varphi=E_0\varphi$, we expect that $\varphi$ becomes a resonant
state for the Hamiltonian $H(t)$, in the sense that,
$$
\langle \varphi, U(t,0) \varphi \rangle \approx e^{-i\lambda |t|}\, ,
$$
for  some $\lambda \in \mathbb C, \Im \lambda <0$, close to $E_0$.

The evolution for time-dependent Hamiltonians has been
considered by many authors. For example, spectral and scattering theory for
this problem has been treated by J. Howland in several articles (see
e.g.  \cite{howland1} and \cite{howland2}).

More recently,  in \cite{yajima1} and \cite{yajima2}, the authors
have considered  a perturbation of the free Laplacian by a time-periodic potential and prove the
absolute continuity of the Floquet spectrum.

There are also  some  results on the characterization of the resonance
phenomenon for a time independent Hamiltonian $H$, in terms of  local  exponential decay in time
of the  evolution  $\langle \varphi,  e^{-iHt}\varphi  \rangle$  for  an adequate resonance state $\varphi$  see e.g. \cite {CoSo, hu1, JeNe, MeSi, pf1, sk2, sowe}.

The relation between resonances and time decay of the evolution
can be seen from the following formula, which expresses the evolution group
as the Fourier transform of the derivative of the spectral measure,
\begin{equation}
 \label{T1}
\langle \varphi, e^{-iHt} \varphi\rangle =
\frac{1}{2\pi
i}\int_0^{\infty}e^{-itE}\langle \varphi, Im \;
(H-E-i0)^{-1}\varphi \rangle dE
\end{equation}
In several cases, the function $
F(E)\equiv \langle \varphi,\; (H-E-i0)^{-1}\varphi \rangle
$
can be continued in the lower half plane, through the cut due  to the presence of the continuous spectrum. If this function has  a pole at the complex number
$E_0-i\Gamma, \Gamma >0$, then, by deforming the contour of integration and
using residue calculus,  it  can be proven that
$$
 \langle \varphi, e^{-iHt} \varphi\rangle = e^{-itE_0-t\Gamma}\|\varphi\|^2,
$$
which is slowly (and exponentially) decaying, if $\Gamma$ is
small.

Mathematical justification of this result (the single-pole
approximation) is quite difficult and requires strong conditions. We mention for instance \cite{abbfr, ge2, hu1, sk1, sk2}.
For a concrete one dimensional model a  different approach appears in \cite{lav2}.

 A  more recent   result on the   dynamical characterization of the resonance states  was proposed
 in  \cite{CGHu}. It is based on  the  positive commutator theory of E. Mourre \cite{Mo} and in
 this paper we adopt this point of view.   The correspondence between  these resonance states  and those defined
 from a  meromorphic  continuation of $F(z), \Im z >0$ was discussed in \cite{abbfr}.

Dynamical resonance behavior of periodically perturbed Hamiltonians has already been obtained
for example, in  \cite{cos} and \cite{sowe2}, in a formulation similar to ours.

Here, we first
obtain the Fermi Golden Rule for a generic set of perturbations. Also, we prove directly the Mourre
estimate for the corresponding Floquet Hamiltonian. For this reason, our results
hold away from thresholds.

Previous works need a local decay, pointwise in time, which could hold  at thresholds as well,  (see e.g  \cite{JeNe}  for discussion).

The article is organized as follows. First, we give a brief review of the results of \cite{CGHu}  in Section 2
  and of the Howland-Floquet formalism in  Section
3.  In Section 4 we describe the model
studied in the paper. The resonance states for the associated  Floquet operators are
described in Section 5 and  in Section
6 we show that  the Fermi golden rule holds for a generic class of  time dependent perturbations. Finally in Section 7  we derive a local decay in time on the propagator associated to the  the time dependent Schr\"odinger equation.

\section{Mourre estimates and resonances}

Let  $H$ be a self--adjoint operator acting on a Hilbert space $
\mathcal{H}$. For every Borel set $\Omega$, denote by $E_\Omega (H)$ the  spectral projector of the selfadjoint operator $H$ associated  with $\Omega$. We will say that $H$ satisfies a Mourre estimate \cite{Mo}  on an interval $ I =
(a,b) \subset \RR$ if there exists a self adjoint operator $A$
such that,
\begin{equation} \label{EM}
E_I(H)i[H,A] E_I(H) > c E_I (H)
+K,
\end{equation}
where $c>0$ and $K$ is a compact operator.

The commutator  $i[H,A] = i(HA-AH)$ may be difficult to define, due to domain problems. Its definition
requires the condition $e^{itA} D(H) \subset D(H)$  forall  $t \in \mathbb R$. Then  the estimate,
$$| i\langle H u, Av \rangle - i \langle Au, v\rangle | <
C \|u\| \ \| (H+i)v \|,$$
$\forall u,v \in D(H) \cap  D(A)$ allows  to define  $i[H,A]$ in the quadratic form sense.
Next,  we  consider multiple order commutators,
$$ ad^{(1)}_A (H)\equiv i[H,A]
$$
and for $n\in \mathbb N$
$$
ad^{(n+1)}_A (H)\equiv i[ad^{(n)}_A (H),A].
$$
Suppose that  $ad^{(j)}_A (H), \;\;j=1 \cdots  \nu$  are defined as  H-bounded operators, and    \eqref{EM} holds with $K=0$   (which implies that $H$ has no eigenvalue in $I$). Then for  some $s >1/2$,  the weighted resolvent
$$(A-i)^{-s}(H-z)^{-1}(A+i)^{-s};  \Re z \in I, \Im z >0
$$ has a limit in the  bounded operator sense on $\mathcal{H}$ as $\Im z $ approaches $0$.  Moreover, for all $\varphi \in \mathcal{H}$
the function,
$$
\lambda \in I \to g_\varphi (\lambda)\equiv \langle (A-i)^{-s}(H-\lambda-i0)^{-1}(A+i)^{-s} \varphi, \varphi\rangle
$$
admits derivatives up to order $n-1$ on $I$.

 Further, suppose that $H$ is a self--adjoint operator having a
simple eigenvalue $E_0 \in I $, {\it embedded in the continuous spectrum}.  Let  $\varphi_0$ be the associated eigenvector,
$H\varphi_0 =E_0\varphi_0$, $ \Vert \varphi_0 \Vert =1$. Denote by $P$ the corresponding eigenprojector and $ Q=\mathbb I -P$.
Consider the perturbed Hamiltonian,
$$
H_{\alpha}=H+\alpha W
$$
Assume also  that the operators  $ad^{(j)}_A (W), \;\;j=1 \cdots \nu,$  are  H-bounded operators. Then  for $\alpha$ small enough, the function
$$
F(z, \alpha)\equiv \langle Q W\varphi_0, (QH_\alpha Q-z)^{-1
}QW \varphi_0 \rangle, \Re z \in I $$
  has  a boundary value as $ \Im z \to 0$. Moreover $E\in I \to F(E+i0, \alpha)$
admits derivatives up to order $n-1$.

The main result in \cite{CGHu},  states the following. Let $N \geq 1$ and $ \nu > N+5$ be some integers. Under above conditions, there exists a function $g \in C^{\infty}_0 (\mathbb R)$, such that $g(\lambda)=1$ in a small interval around $E_0$ with
$ \sup \vert g\vert \leq1$,  and  complex numbers $ E_\alpha $ such that for $ \alpha$ small enough
\begin{equation} \label{main}
\langle \varphi_0, e^{-itH_{\alpha}} g(H_{\alpha})\varphi_0 \rangle = a(\alpha) e^{-iE_{\alpha  }t} + b({\alpha},t),
\end{equation}
where $a(\alpha)= 1- O(\alpha ^2)$ and $b({\alpha}, t) = O\bigg( \alpha^2 \vert  \log \vert \alpha \vert \vert (t+1)^{1-N}\bigg)$.

In the following we write $\langle \varphi_0, e^{-iH_{\alpha}t} g(H_{\alpha})\varphi_0 \rangle \approx e^{-iE_{\alpha }t}$.

Moreover for $ \alpha \in \mathbb R$, and small enough
\begin{equation} \label{ER}
E_{\alpha}=E_0 +\alpha\langle \varphi_0, W \varphi_0
 \rangle
 - \alpha ^{2} F(E_0+i0,0)+o(\alpha^2).
\end{equation}
Note  that for $\epsilon =\Im z >0$ and $\alpha\neq 0$, we have that,
$$
\Im F(z,\alpha)=\alpha ^2 \epsilon \| Q(H_\alpha-z)^{-1}QW\varphi_0\|^2 \geq 0.
$$
Hence, suppose that,
$$
\frac{\Gamma }{2}:= \Im \; F(E_0 +i0, 0) >0.$$
This necessarily gives that $E_0$ must be
embedded in the continuous spectrum of the operator $H$. Then $\langle \varphi_0, e^{-iH_{\alpha}} g(H_{\alpha})\varphi_0 \rangle $ exhibits a local exponential decay in time i.e. $\varphi_0$ is a metastable state associated to the hamiltonian $H_\alpha$.

Although  the definition  of resonances requires the strict positivity of
 $\Gamma$,  we call the energy $E_\alpha$  in \eqref{ER} a resonance for $H_{\alpha}$


\section{Howland formalism}\label{sec1}

In this section, we review basics facts of the  time dependent theory initiated in \cite{howland1}. Let $\{ H(t), t \in  \mathbb  R\}$ be a family of selfadjoint operators  in an Hilbert space $\HH$. Suppose that  for $t  \in \mathbb R $, $H(t)$  has a  constant domain ${D} $. Furthermore we assume that the family $\{ H(t), t \in  \mathbb  R\}$  is $T$- periodic, $T>0$,  i.e.   $H(t+ T)=H(t)$. We notice that an important part of
this theory also applies in the non periodic case.

Consider  the
abstract time--dependent Schr\"{o}dinger equation,
\begin{equation}\label{equation}
i \frac{\partial \phi}{\partial t}(t) = H(t) \phi(t); \quad \phi(0) = \phi \in \HH.
\end{equation}
 Then under adequate conditions \eqref{equation} generates a unique propagator $\{ U(t,s); ( t,s) \in {\mathbb R}^2 \}$.

Further let $\mathcal{K} = L^2(\mathbb T ;{ \mathcal{H}}), {\mathbb T}:= {\mathbb R} /
T {\mathbb Z}$ be  the complex
Hilbert space of weakly measurable, $\mathcal{H}-$valued functions
with inner product,
 $$ \langle f,g \rangle  = \int_{0}^{T}
\langle f(t),g(t) \rangle_0 \, dt \, , $$ where $ \langle .\, , \,
. \rangle_0 $ is the corresponding  inner product in
$\mathcal{H}$. Note that the enlarged space $\mathcal{K}= L^2(\mathbb T) \otimes \mathcal{H}.$

The propagator $U(\cdot, \cdot)$
induces a strongly continuous one parameter unitary group $\{ W(\sigma);
 \sigma \in \mathbb R\}$ on the space $\KK$,  defined  as
\begin{equation} \label{rel}
 W(\sigma ) \phi(t, \cdot) = U(t,t-
\sigma)\phi(t - \sigma, \cdot );  \; \forall \phi \in \HH.
\end{equation}
Moreover, the  Floquet Hamiltonian,
$$
K=-i\frac{d}{dt}\,\otimes I_x \,+H(t).
$$
 with domain ${D}(K) = \{ \phi \in \HH; \; K\phi \in \HH \}$ is precisely the infinitesimal generator of $W(\sigma)$, that is,
  $$ W(\sigma )  = \large{e}^{-iK \sigma},  \, \sigma \in \mathbb R .$$
The idea behind this construction is that  the time-dependent evolution
in $\HH$ has been turned into a time-independent
problem in the Floquet space $\KK$.

\section{Time dependent Hamiltonian}

We now use the Floquet structure, combined with the results in \cite{CGHu},
to study resonances for a time periodic
family $H(t)$ of quantum
Hamiltonians  acting in the Hilbert space $\HH=L^2( \mathbb R^d)$.

Here,
the resonant behavior will be characterized by the local decay in time of the survival
probability,
$$P_s(t)\equiv |\langle \varphi, U(t,s)\varphi \rangle|^2 ,
$$
 for an adequate state $\varphi \in \HH$ and where $U(t,s)$ is the corresponding propagator.

However, unless  the Hamiltonian is time
independent, in which case $U(t,s)=U(t-s,0)$, the asymptotic behavior of the
survival probability will depend
on the initial time $s$. Actually, we shall obtain a result on the average value of
this quantity on a time interval of length $T$.

Here  $U_{\alpha}(t,s)$ denotes  the propagator associated to a Hamiltonian
$H_{\alpha}(t), t \in  \mathbb T$.  We now define precisely this  family  of operators.

Fix an integer $N$ and let $\nu:=  N+ 6 $. The Hamiltonian $H_{\alpha}(t)$ will be a time dependent perturbation of a free operator $ H$ acting in $\HH$ and defined as
\begin{equation}\label{free}
H=-\Delta +V,
\end{equation}
where $V: \mathbb{R}^d \to \mathbb{R}$ is a smooth function satisfying the following assumptions.
Let $ <x>:= (1 + \vert x \vert^2)^{1/2}$.
\medskip

{\bf hV1}:  { \it $V \in C^{\nu} (\mathbb{R}^d) $and it satisfies : there exists $p >2 $ such that  for all $ \alpha, \vert \alpha \vert \leq \nu$:
\begin{equation}  \label{hv}
sup_{x \in\mathbb R ^d }<x>^{p+  \alpha} \vert \partial^\alpha  V(x)  \vert  < \infty.
\end{equation}
}

Also  from \cite{rs2}, the operator  $H$ with domain $D(H) = \HH^2(\mathbb{R}^d)$ is a self--adjoint on $\HH$; here $\HH^2(\mathbb{R}^d)$ is the standard Sobolev spaces. Moreover,   the
spectrum  $ \sigma(H)= \sigma_d(H) \cup [0, +\infty)$.

The set $ \sigma_d(H) $ consists of a discrete set
of negative eigenvalues, they can accumulate at the threshold $0$. On the other hand $ \sigma_{ac}(H)= [0, +\infty) $, and because of our assumption there is neither  singular continuous spectrum or  positive eigenvalue embedded in $[0, +\infty) $.

In this work, we use standard notation to denote  different types of spectrum of a selfadjoint operator (see e.g. \cite{rs2}).

The operator $H$ does not depend on $t$, but we can visualize it in the  formalism described in the previous section. In
this sense, the corresponding free Floquet Hamiltonian is
$$
K =i\frac{\partial}{\partial t}\otimes I_x+ I_t \otimes H,
$$
acting on the extended Hilbert space $$\KK= L^2(\mathbb T;L^2(\mathbb{R}^d))=L^2(\mathbb T)\otimes
L^2(\mathbb{R}^d),$$
Here, $ \mathbb T= \mathbb R/ T\mathbb Z$, $I_t$ and $I_x$ denote  the identity operator on the spaces $L^2(\mathbb T)$ and
$L^2(\mathbb{R}^d)$ respectively. It is easy to see that $D(K) = \HH^1(\mathbb T) \otimes \HH^2(\mathbb{R}^d)$.

Further, the operator $i\frac{\partial}{\partial t}$ in $L^2( \mathbb T)$ has a discrete spectrum, with eigenvalues $n \omega \in \mathbb{Z}$, $  \omega := 2\pi/T $ and
eigenvectors
$e_n(t)={1\over \sqrt{ T}}e^{in\omega t}$. We denote the one dimensional projection
  $p_n=|e_n\rangle \langle e_n|$. Hence, the spectrum
of $K$ is
$$\sigma(K)=\sigma_{ac}(K)=\bigcup_{n  \in {\mathbb{Z}}}[n\omega ,\infty)=\mathbb{R}.
$$
On the other hand, the  pure point spectrum  of $K$ consists of the translation of
the eigenvalues of $H$  by any  $n \omega, n \in \mathbb N$.

We  also suppose that,

{\bf hV2}:  { \it the operator $H$ has a simple   eigenvalue $E_0$   with eigenvector $\varphi_0$
such that  for all $ n \in \mathbb N^*$, $\mu_n := E_0 + n\omega \notin \sigma_{d}(H)\cup \{0\}$.}

The assumption {\bf hV2} means that first $E_0$ is also a simple eigenvalue of Floquet Hamiltonian $K$ but it is embedded
in its absolutely continuous spectrum.  Actually, this is true for all
eigenvalues of $K$. Moreover $E_0$ is not a spectral threshold of    $K$.
In fact   without loss of generality we will  suppose  here :

{\bf hV'2}: { \it  the operator $H$ has a simple   eigenvalue $E_0$   with eigenvector $\varphi_0$
such that  $\vert E_0\vert < \omega $.}

\noindent Clearly  this implies assumption {\bf hV2}.

We now  introduce the  time dependent perturbation. Let
$$  (t,x) \in \mathbb T \times \mathbb R^d \to W(t,x) \in \mathbb R$$
be a time periodic potential, $W(x,t+T)=W(x,t); t \in \mathbb R$, $x \in \mathbb R^d$   satisfying,

{\bf hW}  { \it $W \in C( \mathbb T; C^\nu(\mathbb R ^d))$ and  there exists  $p >2 $ such that for all
$ \alpha, \vert \alpha \vert \leq 2$
\begin{equation}  \label{hw}
   \sup_{t \in \mathbb T } \sup_{x \in \mathbb R ^d }\{  <x>^{ p+ \alpha}   \vert  \partial_x^\alpha W(x,t)  \vert \} < \infty.
\end{equation}}
Then the perturbed Hamiltonian,
\begin{equation}\label{perturbed}
H_{\alpha}(t) =H+\alpha W(x,t); \alpha \in \mathbb R, t \in \mathbb T
\end{equation}
is  a self adjoint  operator with a  time independent domain, $ \mathcal {D}(H_\alpha(t))$= $ {\mathcal H}^2(\mathbb R^d)$.

The corresponding selfadjoint  Floquet Hamiltonian is
\begin{equation}\label{perturbedfloquet}
K_{\alpha}=K+\alpha W(x,t),
\end{equation}
acting on the enlarged space $\KK$ with domain $ \mathcal {D}(K_\alpha)= \mathcal {D}(K)$, for all
$ \alpha \in \mathbb R$.

\subsection{Mourre estimate for the Floquet operator}

Consider the following operator  $ D:= -i {\nabla}(-\Delta + 1)^{-1}$, it is a bounded operator on $L^2(\mathbb R^d)$. Also, set
$$
A={1\over 2}(x\cdot D+ D \cdot x), $$
Then $A$ is an essentially selfadjoint operator on $L^2(\mathbb R^d)$  such that
$ e^{itA }{\mathcal H}^2(\mathbb R^d) \subset {\mathcal H}^2(\mathbb R^d)$ (see e.g. \cite{yo}). We denote by $$
B=I_t \otimes A,
$$
 the corresponding conjugate operator acting on the  space $\KK$. It is easy to see that in the form sense on $C_0^\infty(\mathbb R ^d) \times C_0^\infty(\mathbb R ^d) $
\begin{equation} \label{comml}
i[-\Delta,A]=-2 \Delta (-\Delta + 1)^{-1},
\end{equation}
and then it extends  to a bounded  selfadjoint operator  in $L^2(\mathbb R^d)$.

We  also have the following

\begin {lemma} \label{comm}   The commutator $i[H,A]$, defined  in the form sense  on $C_0^\infty(\mathbb R ^d) \times C_0^\infty(\mathbb R ^d) $,  extends to a  bounded  selfadjoint
operator in $L^2(\mathbb R^d)$.
Moreover, the multiple commutators $ad_{A}^j (H)$ are bounded, for $j= 1...  \nu$.
\end {lemma}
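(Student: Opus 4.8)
The starting point is the form identity
$$
i[H,A] = i[-\Delta, A] + i[V,A]\qquad\text{on } C_0^\infty(\mathbb{R}^d)\times C_0^\infty(\mathbb{R}^d).
$$
The first summand is already handled by \eqref{comml}: it equals $-2\Delta(-\Delta+1)^{-1}$, a bounded selfadjoint operator. Hence the whole lemma reduces to proving that $i[V,A]$, and more generally each iterated commutator $ad^{(j)}_A(H)$, $j=1,\dots,\nu$, extends to a bounded operator; selfadjointness of the extension then follows from the symmetry of the forms together with essential selfadjointness of $A$ and the invariance $e^{itA}{\mathcal H}^2(\mathbb{R}^d)\subset {\mathcal H}^2(\mathbb{R}^d)$ recalled before the lemma.

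First I would record the elementary commutation rules and the convenient decomposition $A = x\cdot D + \tfrac12[D,x]$, in which $[D,x]=\sum_j[D_j,x_j]$ is a \emph{bounded} function of $-\Delta$; thus, modulo bounded corrections that are again functions of $-\Delta$, commuting with $A$ means commuting with $x\cdot D$. For a smooth function $c$ one has
$$
[c(x),D_k] = -i(\partial_k c)(-\Delta+1)^{-1} + (-i\partial_k)(-\Delta+1)^{-1}\bigl((\Delta c)+2(\nabla c)\cdot\nabla\bigr)(-\Delta+1)^{-1},
$$
while for a polynomially bounded smooth function of momentum $g(-\Delta)$ one has $[g(-\Delta),D_k]=0$ and $[g(-\Delta),x_k]=(\text{bounded function of }-\Delta)\cdot\partial_k$, again bounded. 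Taking $c=V$ and using \textbf{hV1} (so that $\partial^\alpha V$ decays like $\langle x\rangle^{-p-|\alpha|}$ with $p>2$), the operator $x\cdot[V,D]$ is a finite sum of products of a \emph{bounded} multiplication operator of the form $x^\beta(\partial^\alpha V)$, with $|\beta|\le|\alpha|$ (bounded since $p>0$), with bounded operators built from $\nabla$ and $(-\Delta+1)^{-1}$; after commuting the residual factors of $x$, at bounded cost, next to these decaying multiplication operators, $i[V,A]$ is seen to be bounded. Together with \eqref{comml} this yields the first assertion.

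For the multiple commutators I would argue by induction on $j\le\nu$, the inductive statement being that $ad^{(j)}_A(H)$ is a finite sum of an $x$-free bounded function of momentum (coming from the $-\Delta$ part) and terms of the shape
$$
c_1(x)\,T_1\,c_2(x)\,T_2\cdots c_m(x)\,T_m,
$$
where each $T_i$ lies in the algebra generated by $\nabla(-\Delta+1)^{-1}$ and $(-\Delta+1)^{-1}$ (a bounded operator of order $\le 0$) and each $c_i(x)$ is a finite sum of monomials $x^\beta\partial^\alpha V$ with $|\alpha|\le j$ and $|\beta|\le|\alpha|$, hence a bounded multiplication operator by \textbf{hV1}. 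The induction step uses the rules above: by the Leibniz identity, $[c_i(x),x\cdot D]=x\cdot([c_i(x),D])$ differentiates $c_i$ once (raising $|\alpha|$ by one) but the extra factor of $x$ is absorbed because the decay improves by one power, while $[T_i,x\cdot D]=[T_i,x]\cdot D$ produces a bounded momentum operator times $D$ with the offending $x$ eliminated and $[g(-\Delta),x\cdot D]$ stays $x$-free; no term ever accumulates an unpaired power of $x$. Since $j$ runs only up to $\nu=N+6$ and $V\in C^{\nu}$, all the derivatives $\partial^\alpha V$ that arise exist and are bounded, so every term is a product of bounded operators.

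The routine but delicate point, and the one I expect to be the main obstacle, is precisely this bookkeeping: one must verify that at every stage of the induction each factor of $x$ generated by the $x\cdot D$ part of $A$ can be commuted, at the cost of a bounded operator, to a position adjacent to a multiplication operator carrying enough decay (using $[g(-\Delta),x_k]=(\text{bounded})\cdot\partial_k$), and that the supply of derivatives of $V$ permitted by \textbf{hV1} is not exhausted before order $\nu$. All identities used are first established as identities of quadratic forms on $C_0^\infty(\mathbb{R}^d)\times C_0^\infty(\mathbb{R}^d)$ and then extended by density once the relevant boundedness is in hand.
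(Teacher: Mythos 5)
Your proof is correct and follows essentially the same strategy as the paper: split $i[H,A]$ into the $-\Delta$ part handled by \eqref{comml} and the $V$ part handled by explicit commutation with $x$ and $D$, using the decay built into \textbf{hV1} to absorb the factors of $x$ produced by $A$. Your inductive bookkeeping for $ad^{(j)}_A(V)$, $j\le\nu$, is more detailed than the paper's one-line remark that these ``involve some combination of higher order derivatives of $V$ and bounded operators,'' but it is exactly the argument the paper intends.
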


\proof  Formally we have that,
$$i[V,A]=  i[xV,D]  +i   [D,x]V.$$
Clearly, the commutator,
$$  [xV,D] =  i(xV)' (-\Delta + 1)^{-1} +  i\nabla( -\Delta + 1)^{-1} (2 \nabla.(xV)' - (xV)'')( -\Delta + 1)^{-1},$$
extends to a compact operator in $L^2(\mathbb R^d)$.

Indeed,  on the one hand our assumption hV1 implies that  the operators $ (xV)' (-\Delta + 1)^{-1} , (xV)'' (-\Delta + 1)^{-1}  $ are  compact; hence,  $[D,xV] $ as a sum of two compact operators, is also compact.

Now  by using
$$ [x,D] = i(-\Delta + 1)^{-1}  -2i \Delta ( -\Delta + 1)^{-2}$$
 and the assumption hV1, it is easy to see that $ [x,D] V$ as well $V[x,D] $ are compact operators.

Therefore these arguments together with the identity \eqref{comml} prove the first part of the lemma.

Now computing  $ad_{A}^j (-\Delta); j= 1...  \nu$. We get
$$ ad_{A}^j (-\Delta) = (-\Delta +1)^{-j} q\bigg( -2 \Delta((-\Delta +1)^{-1}\bigg)$$
where $q$ is a polynomial of degree $j$, so it is bounded operator on $\HH$.

Finally, the multiple commutators  $ad_{A}^j (V); j =1 ... \nu$ involve  some combination of higher order derivatives of $V$ and  bounded operators. Thus, these are also bounded up to the order $\nu$,
 by our assumption hV1.  \qed

Since we have
$$
i[K,B]=I_t \otimes i[H,A],
$$
then, the Lemma \ref{comm} implies that  the commutator $i[K,B]$
is a bounded   selfadjoint operator in $\KK$. Moreover,   the higher order commutators exist as  bounded operators.
Similarly, by using  assumption {\bf hW}, we can prove that $ad_{B}^j (W)$, $j= 1...  \nu$ also are bounded operators.

We now construct a Mourre estimate for the free Floquet Hamiltonian $K$.
Let  $J_{n_0}= (e_{n_0}+J_-,  e_{n_0}+ J_+), J_- < 0, J_+ >0$ be a   small interval around the energy $e_{n_0}= E_0 + n_0\omega$   so that $J_{n_0}$ contains no other eigenvalues of $H$.

Next let
$$E_{J_{n_0}} (K)= \oplus _{n \in \mathbb Z} p_n  \otimes E_{J_{n_0}} ( H+n\omega),$$
be the spectral projector of $K$ associated with the interval $J_{n_0}$. Note also  that $E_{J_{n_0}} (K) = E_{J_0} (K-n_0\omega)$.
\begin{lemma} \label{Mourre} Suppose {\bf hV1} and {\bf hV'2}. Let $n_0 \in \mathbb Z$ and
$J_{n_0}$ be  the energy interval defined above. If $\vert J_{n_0}\vert$ is small enough, then the operator $K$ satisfies a local Mourre estimate. Explicitly, there exists a constant $c>0$ independent of $n_0$ and a compact operator $L$ such that
\begin{equation}\label{mourre}
E_{J_{n_0}} (K) i[K,B]E_{J_{n_0}} (K) \ge cE_{J_{n_0}} (K)+L.
\end{equation}
\end{lemma}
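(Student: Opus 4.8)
The plan is to use the tensor structure of $K$ and of its conjugate operator $B=I_t\otimes A$ to reduce \eqref{mourre} to a family of ordinary Mourre estimates for $H=-\Delta+V$ on $L^2(\mathbb R^d)$, one for each Floquet band, and then to reassemble them while controlling uniformity. Since $B$ commutes with $-i\frac{d}{dt}\otimes I_x$ we have $i[K,B]=I_t\otimes i[H,A]$, and since $K=\bigoplus_{n\in\mathbb Z}p_n\otimes(H+n\omega)$ its spectral projector splits as $E_{J_{n_0}}(K)=\bigoplus_{n\in\mathbb Z}p_n\otimes E_{I_n}(H)$ with $I_n:=J_{n_0}-n\omega$. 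Hence
\begin{equation}
E_{J_{n_0}}(K)\,i[K,B]\,E_{J_{n_0}}(K)=\bigoplus_{n\in\mathbb Z}p_n\otimes\bigl(E_{I_n}(H)\,i[H,A]\,E_{I_n}(H)\bigr),
\end{equation}
so it suffices to produce, for every $n$, an inequality $E_{I_n}(H)\,i[H,A]\,E_{I_n}(H)\ge c\,E_{I_n}(H)+L_n$ with $c>0$ independent of $n$, $L_n=L_n^*$ compact, and $L_n=0$ for all but finitely many $n$; then $L:=\bigoplus_n p_n\otimes L_n$ is finite rank and we are done. Writing $J_{n_0}=J_0+n_0\omega$ shows that the family $\{I_n\}_{n\in\mathbb Z}=\{J_0-k\omega\}_{k\in\mathbb Z}$ and the operator $i[K,B]$ do not depend on $n_0$, which already forces the resulting $c$ to be independent of $n_0$.

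First I would fix $|J_0|$ small enough. Using {\bf hV'2} ($E_0<0$, $|E_0|<1$) and {\bf hV2} ($E_0+m\omega\ne 0$ for $m\in\mathbb N^*$, with $|E_0+m\omega|\to\infty$), one checks that $\delta_0:=\inf_{m\in\mathbb Z}|E_0+m\omega|>0$ (for $m\le 0$ this is automatic since then $E_0+m\omega<0$); choosing $|J_0|<\delta_0$, and small enough that $J_0\subset(-\infty,0)$ contains $E_0$ as its only eigenvalue of $H$, ensures $0\notin\overline{I_n}$ for every $n$, so each $I_n$ lies entirely in $(-\infty,0)$ or entirely in $(0,\infty)$. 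If $I_n\subset(-\infty,0)$ then $I_n$ meets only $\sigma_d(H)$, so $E_{I_n}(H)$ is finite rank (possibly $0$), the estimate holds trivially with $L_n:=E_{I_n}(H)i[H,A]E_{I_n}(H)-c\,E_{I_n}(H)$, and $E_{I_n}(H)=0$ for all but finitely many such $n$ because $\sigma_d(H)$ is bounded below while $I_n\to-\infty$. If $I_n\subset(0,\infty)$ then $\inf I_n\ge\delta>0$ with $\delta$ independent of $n$ (the smallest positive number among $\{E_0-n\omega\}$ is positive by {\bf hV2}), and since $V=O(\langle x\rangle^{-p})$ with $p>2$ and $H$ has no positive eigenvalue, the standard Mourre theory for Schr\"odinger operators with the conjugate operator $A$ — combining \eqref{comml}, the compactness of $i[V,A]$ from Lemma \ref{comm}, and the virial argument — gives a \emph{remainder-free} estimate $E_{I_n}(H)i[H,A]E_{I_n}(H)\ge c_n\,E_{I_n}(H)$ with $c_n$ bounded below by a positive constant uniformly in $n$ and $c_n\to 2$ as $n\to-\infty$. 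Taking $c:=\inf_n c_n>0$ over these bands gives a single admissible constant.

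Assembling band by band yields $E_{J_{n_0}}(K)i[K,B]E_{J_{n_0}}(K)\ge c\,E_{J_{n_0}}(K)+L$ with $c>0$ independent of $n_0$ and $L=\bigoplus_n p_n\otimes L_n$ a finite-rank self-adjoint operator, i.e. \eqref{mourre}. The main obstacle is the combination of uniformity of $c$ with compactness of the assembled remainder: infinitely many bands have their window in $(0,\infty)$, so if the one-band estimate there retained a nonzero sandwiched remainder it would generically be non-compact and $L$ would fail to be compact — this is exactly why the remainder-free Mourre estimate on positive-energy windows is needed, and where the absence of positive embedded eigenvalues of $H$ enters. The finitely many windows that could approach the threshold $0$ are kept away from it precisely by {\bf hV2}/{\bf hV'2}. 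The classical input — the clean Mourre estimate for $-\Delta+V$ on compact subintervals of $(0,\infty)$ with the conjugate operator $A$ — is by now standard (cf. \cite{Mo}) and reproving it would only repeat the commutator computations of Lemma \ref{comm} together with the standard positive-commutator/virial argument.
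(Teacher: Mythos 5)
Your set-up matches the paper's: tensor decomposition of $K$, $B$, and $E_{J_{n_0}}(K)$, reduction to band-wise Mourre estimates for $H$ on the shifted windows $I_n=J_0-n\omega$, and the role of {\bf hV2}/{\bf hV'2} in keeping every $I_n$ a fixed distance from the threshold $0$ (so the estimate can be insensitive to $n_0$). The gap is that the sentence which actually does the work --- ``the standard Mourre theory for Schr\"odinger operators with the conjugate operator $A$ \ldots\ gives a remainder-free estimate $E_{I_n}(H)\,i[H,A]\,E_{I_n}(H)\ge c_n E_{I_n}(H)$ with $c_n$ bounded below by a positive constant uniformly in $n$'' --- is asserted, not proved. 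With $\vert J_0\vert$ fixed once and for all (as it must be, since it cannot depend on $n$), the usual Mourre argument on one band gives only $E_{I_n}(H)\,i[H,A]\,E_{I_n}(H)\ge c_n E_{I_n}(H)+K_n$ with $K_n$ compact; removing $K_n$ needs either to shrink $I_n$ (whose admissible size then depends on $n$, which you cannot allow) or to prove directly that $\Vert E_{I_n}(H)K_nE_{I_n}(H)\Vert$ is small uniformly in $n$. That uniform smallness is the content of the lemma, and this particular $A$ (built from the bounded $D=-i\nabla(-\Delta+1)^{-1}$, not the usual dilation generator) is not covered by a textbook reference at that level of uniformity. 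As written you are citing the conclusion where a proof is needed.

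You also mis-diagnose what is required: you argue that a nonzero per-band remainder would ``generically'' be non-compact, so a remainder-free band estimate is indispensable. The paper's proof goes the other way. It keeps nonzero per-band blocks of the form $E_{I_n}(H)\,C\,E_{I_n}(H)$ with $C$ a fixed compact operator coming from $V(-\Delta+1)^{-1}$, $H(H+1)^{-1}V(-\Delta+1)^{-1}$, and $i[V,A]$; since $E_{I_n}(H)\to 0$ strongly as $n\to-\infty$, compactness of $C$ upgrades this to $\Vert E_{I_n}(H)\,C\,E_{I_n}(H)\Vert\to 0$, and a block-diagonal operator $\bigoplus_n p_n\otimes L_n$ with each $L_n$ compact and $\Vert L_n\Vert\to 0$ is compact. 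The uniform lower bound comes from the substitution $-\Delta(-\Delta+1)^{-1}=H(H+1)^{-1}+H(H+1)^{-1}V(-\Delta+1)^{-1}-V(-\Delta+1)^{-1}$, with the $H(H+1)^{-1}$ block bounded below on each band by monotonicity of $\lambda\mapsto\lambda/(1+\lambda)$ together with threshold-avoidance, and the other blocks absorbed into the compact $L$. So the precise estimates you would need to justify your ``remainder-free, uniform in $n$'' claim are exactly the paper's computations; your argument is not a shortcut around them but silently presupposes them.
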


\begin{proof} We can choose  $n_0=0$,  the lemma follows for any $n_0 \in \mathbb Z$ by the same arguments.

From \eqref{comml}, we need to bound from  below  modulo a compact operator, the  following operator
$$   E_{J_0}(K) I_t \otimes -\Delta(-\Delta +1)^{-1}E_{J_0} (K)= $$
$$ E_{J_0} (K) I_t \otimes H(-\Delta +1)^{-1}E_{J_0} (K)  -   E_{J_0} (K) I_t \otimes V(-\Delta +1)^{-1}E_{J_0} (K) = $$
$$ E_{J_0} (K) I_t \otimes H(H +1)^{-1}E_{J_0} (K)  +  E_{J_0} (K) I_t \otimes H(H +1)^{-1}V(-\Delta +1)^{-1}E_{J_0} (K)-$$
$$ E_{J_0} (K) I_t \otimes V(-\Delta +1)^{-1}E_{J_0} (K).$$
We first consider the term
$$ E_{J_0} (K) I_t \otimes H(H +1)^{-1}E_{J_0} (K) = \sum_{n \in \mathbb Z} p_n \otimes H (H +1)^{-1}E_{J_0} ( H+n).$$
We  note that if $\vert J_0 \vert$ is small enough then $E_{J_0} ( H+n)=0$, for $n \geq 1$.

In the other hand, if $n <0$ then,
$$H  (H +1)^{-1}E_{J_0} ( H+n) \geq (J_-  - n )(1 + J_- -n)^{-1}E_{J_0} ( H+n)  \geq c E_{J_0} ( H+n)$$
where $c= (1+ J_- )(2 + J_- )^{-1}>0$.

Hence,
$$ E_{J_0} (K) I_t \otimes H(H +1)^{-1}E_{J_0} (K)  \geq  c \sum_{n <0} p_n \otimes  E_{J_0} ( H+n) +  L_1= $$
 $$
  c \sum_{n \in \mathbb Z } p_n \otimes  E_{J_0} ( H+n) +  L_1= c E_{J_0} (K)+ L_1 $$
 where,  $$L_1:= (H(H +1)^{-1}-c)p_0\otimes E_{J_0}(H) = (E_0(E_0+1)^{-1}-c)p_0\otimes E_{J_0}(H), $$

 For $\vert J_0 \vert$  small enough.    $L_1$ is  a rank one operator   as the product of two rank one operators, $p_0$ and  $E_{J_0}(H)$.

Now, let
$$ L_2 :=  E_{J_0} (K) I_t \otimes H(H +1)^{-1}V(-\Delta +1)^{-1}E_{J_0} (K).$$

Because of our assumptions,
 the operator
$H(H +1)^{-1}V(-\Delta +1)^{-1}$ is compact.

On the other hand, for negative $n$,
$$\Vert H(H +1)^{-1}V(-\Delta +1)^{-1} E_{J_0} ( H+n) \Vert = o( 1/ \vert n \vert).$$

Indeed, since $\Vert E_{J_0} ( H+n) (H +1)^{-1}\Vert = o( 1/ \vert n \vert)$, by  using the resolvent equation, $\Vert E_{J_0} ( H+n) (-\Delta  +1)^{-1}\Vert $ and then $\Vert H(H +1)^{-1}V(-\Delta +1)^{-1} E_{J_0} ( H+n) \Vert = o( 1/ \vert n \vert)$.
Finally
$$ L_2 = \sum_{n<0}p_n \otimes E_{J_0} ( H+n) H(H +1)^{-1}V(-\Delta +1)^{-1} E_{J_0} ( H+n) $$
is compact, as a uniform  norm limit of compact operators. Clearly these arguments lead to the compacity of $L_3 =E_{J_0} (K) I_t \otimes V(-\Delta +1)^{-1}E_{J_0} (K)$. Hence,
we conclude that in the quadratic form sense in  $\KK$, there exist a positive constant $c$ and three compact operators, $L_1,L_2, L_3$ such that,
\begin{equation}
 E_{J_0} (K) I_t \otimes -\Delta(-\Delta +1)^{-1}E_{J_0} (K) \geq c  E_{J_0} (K)+ L_1 + L_2 + L_3.
 \end{equation}

Now from  Lemma \ref{comm}  we can repeat the same lines of arguments as above to the operator $L_4:= E_{J_0} (K)I_t \otimes [V,A] E_{J_0} (K)$ and then $L_4$ is again compact. This finishes the proof of the lemma.
\end{proof}
\section{resonances ladder por the Floquet operator}

In this section we want to derive the localization of resonances   for the  Floquet Hamiltonian
\eqref {perturbedfloquet} associated to the  eigenvalue $E_0$ of $H$.

To this end, we introduce further notations concerning the spectrum of the free
Floquet operator. Let  $\varphi_0 $ be the eigenvector of $H$ associated with $E_0$,  $
H\varphi_0=E_0 \varphi _0$. We denote the  orthogonal eigenprojector $\pi_0 := |\varphi_0 \rangle \langle \varphi_0 |$ on $\HH$, onto the one dimensional subspace generated by $\varphi_0$.

Then $ \{E_0 + n \omega; n \in \mathbb Z\} \subset \sigma_{pp}(K)$ and from {\bf hV'2},  if $E$ is any other eigenvalue of $H$ then $\{E+ n \omega; n \in \mathbb Z\}   \cap \{E_0 + n \omega; n \in \mathbb Z\} = \emptyset$.

Next let $ n_0 \in \mathbb Z$ and  $J_{n_0}$  be the interval around $E_0 + n_0\omega$ defined in the previous section. Denote by $f_{n_0}=e_{n_0}(t) \otimes \varphi_0(x)$  the  eigenvector of the operator $K$, associated to the eigenvalue $E_0 + n_0\omega$.

We also use  $ P_{n_0}=|f_{n_0}\rangle \langle f_{n_0}| = p_{n_0} \otimes \pi_0$ and $Q_{n_0}=I_{\KK}-P_{n_0}$.

 Following Section 2,   we need to consider the function,
\begin{equation}
F(z,\alpha)=  \langle f_{n_0},WQ_{n_0}(K_\alpha-z)^{-1}Q_{n_0}W f_{n_0}\rangle,
\end{equation}
where $\Re z \in J_{n_0} $, $ \Im z = \epsilon >0$. Set
\begin{equation} \label{Wn}
W_n(x)={1\over \sqrt{T}}\int_0^T e^{-int}W(x,t)dt.
\end{equation}
 We have the following
\begin{lemma} \label{Im} i) $F(E_0+n_0 \omega+ i0,0)$ exists and it is independent of  $n_0$. For any $\epsilon >0$,
this quantity is given by
\begin{multline}
 F(E_0+i\epsilon, 0) =\sum_{n\not= 0} \langle W_{n} \varphi _0,
 (H+ n \omega -E_0 - i \epsilon )^{-1}W_{n} \varphi _0\rangle + \\
 \langle W_{0} \varphi _0,
(1-\pi _0)(H-E_0 -i \epsilon )^{-1}W_{0} \varphi _0\rangle. \label{F}
\end{multline}
ii) We also have
\begin{multline} \label{Gamma}
 \frac{ \Gamma}{2} = \lim_{\epsilon \to 0}\Im F(E_0 +i \epsilon,0) =\\
 \sum_{n < 0} \Im \langle W_{n} \varphi _0,
 (H+ n \omega -E_0 - i 0)^{-1}W_{n} \varphi _0\rangle.
\end{multline}
\end{lemma}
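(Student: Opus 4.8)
The plan is to reduce both assertions to a fibrewise computation in the Fourier basis $\{e_n\}_{n\in\mathbb Z}$ of $L^2(\mathbb T)$. For $\alpha=0$ the free Floquet operator $K$ is diagonal in this basis: on the fibre $e_n\otimes\mathcal H$ it acts as the time--independent operator $H+n\omega$. Moreover, by the disjointness of the Floquet ladders recalled in Section 5, $P_{n_0}=p_{n_0}\otimes\pi_0$ is the spectral projector of $K$ at the simple eigenvalue $E_0+n_0\omega$, hence commutes with $K$, so the compressed resolvent entering $F(z,0)$ is simply $Q_{n_0}(K-z)^{-1}Q_{n_0}$, again fibrewise. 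First I would expand the potential as $W(x,t)=\sum_m W_m(x)e_m(t)$, with $W_m$ given by \eqref{Wn}, and compute
\[
Wf_{n_0}=\sum_{n\in\mathbb Z}e_{n_0+n}\otimes(W_n\varphi_0),\qquad P_{n_0}Wf_{n_0}=\langle\varphi_0,W_0\varphi_0\rangle\,f_{n_0},
\]
so that $Q_{n_0}Wf_{n_0}=\sum_{n\neq0}e_{n_0+n}\otimes(W_n\varphi_0)+e_{n_0}\otimes(1-\pi_0)W_0\varphi_0$. With $z=E_0+n_0\omega+i\epsilon$, the resolvent $(K-z)^{-1}$ acts on the fibre $e_{n_0+n}\otimes\mathcal H$ as $(H+n\omega-E_0-i\epsilon)^{-1}$, the shift $n_0\omega$ cancelling between the eigenvalue of $K$ and $z$, and on $e_{n_0}\otimes(1-\pi_0)\mathcal H$ as $(1-\pi_0)(H-E_0-i\epsilon)^{-1}(1-\pi_0)$. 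Pairing $(K-z)^{-1}Q_{n_0}Wf_{n_0}$ against $Wf_{n_0}$ and using the orthonormality of the $e_k$ to annihilate the off--diagonal terms produces exactly the series in \eqref{F}; in particular the answer no longer involves $n_0$, which only contributed a common frequency shift, so $F(E_0+n_0\omega+i0,0)$ is independent of $n_0$.

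The term $n=0$ must be isolated, because the corresponding fibre operator is $H-E_0$, whose resolvent at $E_0$ does not exist. Here one uses that, for $H=-\Delta+V$, the eigenvalue $E_0\in\sigma_d(H)$ is negative --- in fact $E_0\in(-1,0)$ by \textbf{hV'2} --- hence isolated in $\sigma(H)$, so $E_0$ belongs to the resolvent set of the restriction $(1-\pi_0)H(1-\pi_0)$. Consequently $(1-\pi_0)(H-E_0-i\epsilon)^{-1}(1-\pi_0)$ is already a bounded self--adjoint operator with a self--adjoint limit as $\epsilon\downarrow0$, so the $n=0$ contribution to $F(E_0+i0,0)$ is a well defined real number; this is why it appears separately in \eqref{F}. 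It is also the mechanism that produces the resonance: $E_0$ is not embedded in $\sigma_{ac}(H)=[0,\infty)$, but it is embedded in $\sigma_{ac}(K)=\mathbb R$, so all the decay must come from the fibres $n\neq0$.

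For the remaining terms the existence of the boundary value as $\epsilon\downarrow0$ follows from the limiting absorption principle for $H$ recalled in Section 2: $H$ satisfies a Mourre estimate with conjugate operator $A$, and by Lemma \ref{comm} the iterated commutators $ad_A^{j}(H)$ are $H$--bounded up to order $\nu$. By the ladder disjointness of Section 5 (a consequence of \textbf{hV'2}) together with \textbf{hV2}, for every $n\neq0$ the energy $E_0-n\omega$ is neither an eigenvalue nor the threshold $0$ of $H$, and $H$ has no singular continuous spectrum; hence, for $s>1/2$, the weighted resolvent $(A-i)^{-s}(H+n\omega-E_0-i\epsilon)^{-1}(A+i)^{-s}$ has a norm limit. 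Since $\varphi_0$ is a rapidly decaying smooth bound state while $W_m$ decays like $<x>^{-p}$ with $p>2$ uniformly in $m$ (by \textbf{hW}), the vectors $W_n\varphi_0$ lie, uniformly in $n$, in the domain of $(A+i)^{s}$, so each summand is well defined. To sum the series and interchange $\lim_{\epsilon\downarrow0}$ with $\sum_n$ I would combine Parseval in the $t$--variable, giving $\sum_n\|<x>^{s}W_n\varphi_0\|^2\le C\sup_t\|<x>^{s}W(\cdot,t)\varphi_0\|^2<\infty$, with a uniform bound on the weighted resolvent norm: apart from the finitely many fibres for which $E_0-n\omega$ is close to $0$ --- each still giving a finite summand, that energy being a non--threshold point by \textbf{hV2} --- the norm is bounded uniformly in $n$ and $\epsilon$, since the distance of $E_0-n\omega$ to $\sigma(H)$ grows like $|n|\omega$ on the negative side and the high--energy limiting absorption estimate for $-\Delta+V$ controls it on the positive side. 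This yields absolute convergence, uniform in $\epsilon$, and completes (i).

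Part (ii) is then immediate. Taking imaginary parts termwise, $\Im\langle W_n\varphi_0,(H+n\omega-E_0-i0)^{-1}W_n\varphi_0\rangle=\pi\langle W_n\varphi_0,\delta(H+n\omega-E_0)W_n\varphi_0\rangle\ge0$, which --- since $H$ is purely absolutely continuous on $(0,\infty)$ with a locally continuous spectral density there (again by the differentiability of $\lambda\mapsto g_\varphi(\lambda)$ from Section 2) --- equals $\pi$ times that density, evaluated on $W_n\varphi_0$ at the corresponding shifted energy. This vanishes unless that energy lies in $(0,\infty)$; as $E_0<0$, the index $n=0$ is excluded (its term is already real) and only the indices of one sign survive, the corresponding energies being precisely the non--threshold points of $[0,\infty)$ singled out by \textbf{hV2}, which gives the one--sided sum \eqref{Gamma}. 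I expect the single genuinely delicate point to be the uniform--in--$n$ control of the weighted resolvent of $H$ at the Floquet--shifted energies in the third paragraph --- the estimate that justifies exchanging $\sum_n$ with $\lim_{\epsilon\downarrow0}$; everything else is the diagonalisation of $K$ and the bookkeeping of Fourier coefficients.
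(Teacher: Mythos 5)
Your argument is essentially the paper's argument: diagonalise $K$ over the Fourier fibres $e_k\otimes\mathcal H$, identify the action of $Q_{n_0}(K-z)^{-1}$ on each fibre, read off the series for $F$, and then take imaginary parts term by term. The only genuine methodological deviation is in how you justify the existence of the boundary value and the passage $\epsilon\downarrow 0$ through the sum: the paper appeals once to the abstract result of Section 2 (the CGHu/LAP machinery applied to $K$ itself, using the Mourre estimate of Lemma~4.2), whereas you redo it fibrewise with the limiting absorption principle for $H$ and then patch the fibres together via Parseval and a claimed uniform-in-$n$ weighted-resolvent bound. Your route is more self-contained but also more delicate; the paper's route gets $\lim_{\epsilon\downarrow 0}F$ for free from the $K$-level Mourre theory and only then identifies the limit term by term (implicitly using the positivity $\Im\langle W_n\varphi_0,(\cdot)^{-1}W_n\varphi_0\rangle=\epsilon\|(\cdot)^{-1}W_n\varphi_0\|^2\ge 0$ for \emph{every} $n$, not only for the surviving indices as you say). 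Both are acceptable, but if you stay with your version you should make the high-energy uniform LAP bound explicit rather than gesturing at it, as it is the only nontrivial input.

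Two small remarks. First, your computation of $Q_{n_0}Wf_{n_0}$ and the cancellation of $n_0\omega$ between the eigenvalue and $z$ is exactly the clean way to see the $n_0$-independence, and it matches the paper's rearrangement $n\mapsto n-n_0$. Second, you write that ``only the indices of one sign survive'' without committing to which sign. In fact your reasoning (the shifted energy $E_0-n\omega$ must lie in $(0,\infty)$, and $E_0<0$) forces $n<0$, which agrees with the paper's own proof (``for $n<0$ and for all positive $\epsilon$ \dots $\ge0$'') and with the remark following the lemma, but contradicts the $\sum_{n>0}$ written in the statement of \eqref{Gamma}. That is a sign typo in the paper (equivalently, it is the sign convention in \eqref{Wn}); it is worth flagging explicitly rather than leaving the sign unstated, and the analogous typo appears in the last term of \eqref{F}, where the fibre resolvent should read $(H-E_0-i\epsilon)^{-1}$, as both you and the body of the paper's proof correctly have it.
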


\proof Let $ z = E_0 +  {n_0}\omega + i \epsilon; \epsilon >0$. The existence of the boundary value is a consequence of Section 2 and our assumptions.  Clearly,
\begin{equation}\label{decK}
(K-z)^{-1}=\sum _{n\in{\mathcal Z}}p_n \otimes (H+ n \omega -z)^{-1}
\end{equation}
Since  $P_{n_0}=p_{n_0} \otimes \pi_0$ and  $Q_{n_0}=I_t\otimes I_x-P_{n_0}$, we have that,
$$
Q_{n_0}(K-z)^{-1}=
\sum _{n\in{\mathcal Z}}(p_n\otimes (H+n \omega -z)^{-1}-p_{n_0}p_n \otimes
\pi _0(H+n \omega -z)^{-1})
$$
For $n={n_0}$, the quantity in the sum is just, $
p_{n_0} \otimes (1-\pi _0)(H + n_0 \omega-z)^{-1},$
while for $n\neq {n_0}$, this term is, $
p_n \otimes (H+n\omega -z)^{-1}$.

Then, we obtain,
\begin{eqnarray*}
F(z, 0)=\sum_{n \not = {n_0}} \langle e_{n_0}\otimes \varphi _0,
Wp_n\otimes (H+ (n-n_0) \omega -E_0 - i \epsilon )^{-1}We_{n_0} \otimes \varphi _0\rangle + \\
\langle e_{n_0}\otimes \varphi _0,
Wp_{n_0} \otimes (1-\pi _0)(H  -E_0 -i \epsilon )^{-1}We_{n_0} \otimes \varphi _0\rangle,
\end{eqnarray*}
and then
\begin{eqnarray*}   F(z, 0)=\sum_{n\not= {n_0}} \langle W_{n-n_0} \varphi _0,
 (H+ (n-n_0) \omega -E_0 - i \epsilon )^{-1}W_{n-n_0} \varphi _0\rangle + \\
\langle W_{0} \varphi _0,
(1-\pi _0)(H  -E_0-i \epsilon )^{-1}W_{0} \varphi _0\rangle.
\end{eqnarray*}
This proves \eqref{F}.

By assumption {\bf hV2}, for $n>0$, $ E_0 -n \omega \in \rho(H)$, where $\rho(H)$ denotes the resolvent set of $H$.

Therefore,
$$\lim_{\epsilon \to 0} \Im \langle W_{n} \varphi _0,
 (H+ n \omega -E_0 - i \epsilon )^{-1}W_{n} \varphi _0\rangle =0.$$
Also, because $E_0 \in \rho((1-\pi _0)H)$ we have
$$
\lim_{\epsilon \to 0} \langle W_{0} \varphi _0,
(1-\pi _0)(H  -E_0-i \epsilon )^{-1}W_{0} \varphi _0\rangle =0.
$$
On the other hand for $n<0$ and for all positive numbers $ \epsilon $
$$ \Im \langle W_{n} \varphi _0,
 (H+ n \omega -E_0 - i \epsilon )^{-1}W_{n} \varphi _0\rangle \geq 0.$$
 Hence these last three estimates prove \eqref{Gamma}.
\qed
{\remark  Since $E_0 -n \omega> 0 = \inf \sigma_c(H)$ if $ n <0$ we can  express the strict  positivity of $\Gamma$ in terms of  strict positivity of the derivative of   the spectral measure   $E_{(-\infty,\lambda]}(H) = E_{(E_0,\lambda]}(H) $.  Indeed we know that in the distributional sense
$$
\Im \langle W_n\varphi _0,  (H+ n\omega -E_0-i0)^{-1}W_n \varphi _0\rangle
= \frac{d\langle W_n\varphi _0, E_{(-\infty,\lambda]}(H) W_ n \varphi_0\rangle}{d \lambda}|_{\lambda = E_0-n \omega}.
$$
This last quantity is strictly positive if $W_n \varphi_0$ has its   spectral support around  the energy $E_0-n \omega$.    We show below that the condition
$$E_{(E_0,\lambda]}(H)W_n \varphi \neq 0$$
is satisfied by a generic class of potentials $W$. }
\medskip

Then we conclude that
\begin{theorem} \label{resF}
Under conditions {\bf hV1}, {\bf hV'2}, ({\bf hV2})  for $\alpha$ small enough, the Floquet Hamiltonian admits resonances of the form
\begin{equation}\label{resonances}
E_{n,\alpha }  = E_0 + n\omega + \alpha c_1 -  \alpha^2 c_2  + o_n(\alpha^2); n \in \mathbb Z.
\nonumber \end{equation}
where $c_1 =  \frac{1}{T} \int_{[0,T) \times \mathbb R^d}  \vert  \varphi_0(x)\vert^2W(x,t)dtdx$ and $c_2= F(E_0+ i0,0)$. In particular the width of these resonances is
\begin{equation} \label{width}
\Gamma  =  2 \alpha^2  \Im F(E_0+ i0,0).
\end{equation}
where $\Im F(E_0+ i0,0) $ is given by \eqref{Gamma}.

Moreover, we have that for each $n \in \mathbb Z$,

\begin{equation} \label{taux decay}
\langle e_{n}\otimes \varphi_0, g( K_\alpha)e^{-i sK_\alpha} e_n\otimes \varphi_0\rangle \approx e^{-iE_{n,\alpha}s }.
\end{equation}
in the sense of  \ref{main}.
\end{theorem}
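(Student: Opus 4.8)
The plan is to prove Theorem \ref{resF} by applying, for each fixed $n_0 \in \mathbb Z$, the result of \cite{CGHu} recalled in Section 2 to the self--adjoint operator $K_\alpha = K + \alpha W$ on $\KK$, with conjugate operator $B = I_t\otimes A$, embedded simple eigenvalue $E_0 + n_0\omega$ of $K$ and associated normalized eigenvector $f_{n_0} = e_{n_0}\otimes\varphi_0$, on the interval $J_{n_0}$. Everything then reduces to checking the hypotheses of that abstract statement in the present concrete setting and to identifying the first two terms of the resulting expansion \eqref{ER}.

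First I would collect the spectral input. By the discussion following assumption {\bf hV'2}, $E_0 + n_0\omega$ is a simple eigenvalue of $K$, it lies in $\sigma_{ac}(K) = \mathbb R$ (so it is genuinely embedded) and it is not a threshold of $K$; $f_{n_0}$ is a normalized eigenvector with eigenprojector $P_{n_0} = p_{n_0}\otimes\pi_0$. Next, $e^{i\sigma B}\mathcal D(K)\subset\mathcal D(K)$ because $e^{i\sigma A}\mathcal H^2(\mathbb R^d)\subset\mathcal H^2(\mathbb R^d)$, and by Lemma \ref{comm} together with the remarks following it the iterated commutators $ad_B^{(j)}(K)$ and $ad_B^{(j)}(W)$ are bounded self--adjoint operators for $j = 1,\dots,\nu$; since $\nu = N+6 > N+5$, the order requirement of \cite{CGHu} with parameter $N$ is met. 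Finally, Lemma \ref{Mourre} provides the Mourre estimate \eqref{mourre} for $K$ on $J_{n_0}$ (with a positive constant $c$ independent of $n_0$ and a compact remainder), and Lemma \ref{Im}(i) guarantees that the boundary value $F(E_0 + n_0\omega + i0, 0)$ exists and is finite. Thus all hypotheses of the abstract theorem are in force.

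Invoking it produces a function $g\in C_0^\infty(\mathbb R)$ with $g\equiv 1$ near $E_0 + n_0\omega$ and $\sup|g|\le 1$, together with complex numbers $E_{n,\alpha}$ such that, for $\alpha$ small, $\langle f_n, g(K_\alpha)e^{-isK_\alpha}f_n\rangle = a(\alpha)e^{-iE_{n,\alpha}s} + b(\alpha,s)$ with $a(\alpha) = 1 - O(\alpha^2)$ and $b(\alpha,s) = O\big(\alpha^2|\log|\alpha||(s+1)^{1-N}\big)$; this is exactly \eqref{taux decay} in the sense of \eqref{main}. Moreover the expansion \eqref{ER}, read through the Floquet dictionary ($H\rightsquigarrow K$, $\varphi_0\rightsquigarrow f_{n_0}$, $E_0\rightsquigarrow E_0+n_0\omega$, and $F$ the function preceding Lemma \ref{Im}), becomes
\[
E_{n,\alpha} = (E_0 + n\omega) + \alpha\,\langle f_n, W f_n\rangle - \alpha^2\, F(E_0 + n\omega + i0, 0) + o_n(\alpha^2).
\]
A direct computation, integrating out the time variable and using $|e_n(t)|^2 = 1/T$, gives $\langle f_n, W f_n\rangle = \tfrac1T\int_{[0,T)\times\mathbb R^d}|\varphi_0(x)|^2 W(x,t)\,dt\,dx = c_1$, independent of $n$, while Lemma \ref{Im}(i) gives $F(E_0 + n\omega + i0, 0) = F(E_0 + i0, 0) = c_2$, also independent of $n$; this yields \eqref{resonances}. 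Taking imaginary parts and invoking Lemma \ref{Im}(ii) gives $\Im E_{n,\alpha} = -\alpha^2\,\Im F(E_0 + i0, 0) + o(\alpha^2)$, which is the width formula \eqref{width} with $\Im F(E_0+i0,0)$ as in \eqref{Gamma}.

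The bulk of the work has in fact already been carried out in Lemmas \ref{comm}, \ref{Mourre} and \ref{Im}: once the Mourre estimate near the embedded eigenvalue and the boundedness of all iterated commutators up to order $\nu$ are available, the theorem is a translation of \eqref{main}--\eqref{ER} into the Howland--Floquet framework. The one genuinely new point is that a single abstract statement must generate the whole ladder $\{E_{n,\alpha}\}_{n\in\mathbb Z}$; I would handle it by observing that the gauge transformation $\phi(t,x)\mapsto e^{-in_0\omega t}\phi(t,x)$ is a unitary on $\KK$ which commutes with $B$ and with every multiplication operator, and which conjugates $K_\alpha$ into $K_\alpha - n_0\omega$ and $f_{n_0}$ into $f_0$; hence the case of arbitrary $n_0$ is unitarily equivalent, up to the harmless energy shift $n_0\omega$, to the case $n_0 = 0$, which makes the $n$--independence of $c_1$ and $c_2$ automatic and leaves only the size of the $o_n(\alpha^2)$ remainder to be tracked case by case. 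So the main obstacle is conceptual — matching the abstract hypotheses of \cite{CGHu} to the Floquet operator and keeping the bookkeeping of the expansion coefficients straight — rather than computational.
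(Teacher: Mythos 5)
Your proposal is correct and follows essentially the same route as the paper: Theorem \ref{resF} is presented there without a separate proof block (``Then we conclude that''), as a direct application of the abstract result of \cite{CGHu} recalled in Section 2 to $K_\alpha$ with conjugate operator $B$, whose hypotheses are exactly those verified by Lemma \ref{comm} (bounded iterated commutators up to order $\nu=N+6>N+5$), Lemma \ref{Mourre} (Mourre estimate on $J_{n_0}$ with $n_0$-independent constant), and Lemma \ref{Im} (existence and $n_0$-independence of $F(E_0+i0,0)$). The gauge-transformation argument you add to treat the full ladder at once is a clean alternative, but the $n_0$-independence it delivers is already built into Lemmas \ref{Mourre} and \ref{Im} as stated.
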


\section{The Fermi golden rule.}

In this section we want to show that   the width $\Gamma$  defined by \eqref{Gamma} is  strictly positive for a generic class of perturbations $W$. To this end we use an   eigenfunction expansion for the operator $H$.

From  \cite{k}, we know  that the operator  $H$ has  a complete set of
real generalized eigenfunctions, $ \{\varphi(k, .); k \in
\mathbb R \}$   which are bounded and uniformly continuous.

By using standard arguments of the eigenfunction expansion theory (see e.g. \cite{rs2}) we have the following Lemma

\begin{lemma} Suppose  {\bf hV} and {\bf hW}. Then
\begin{multline}
\frac{\Gamma}{2}= \sum_{n  >0}  \frac {1}{2\sqrt{\mu_n}}\bigg( \vert \int _{\mathbb R^d}dx W_n (x)\varphi_0(x) { \varphi(\sqrt{\mu_n},x)}\vert ^2 + \\ \vert \int_{\mathbb R^d} dx W_n (x)\varphi_0(x)  { \varphi(- \sqrt{\mu_n},x)}\vert ^2  \bigg).
\end{multline}
where $W_n $ are defined in \eqref{Wn}.
\end{lemma}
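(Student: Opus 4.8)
The plan is to start from the expression \eqref{Gamma} for $\Gamma/2$ obtained in Lemma \ref{Im} and, for each $n$ occurring in that sum, to evaluate the boundary value $\Im\langle W_n\varphi_0,(H-e_n-i0)^{-1}W_n\varphi_0\rangle$, where $e_n>0$ is the relevant energy, by means of the stationary eigenfunction expansion of $H$ recalled just above the statement. Writing $g_n:=W_n\varphi_0$, the structural point is that, under {\bf hV}, $H$ has purely absolutely continuous spectrum on $(0,\infty)$ — no embedded eigenvalues, no singular continuous part — so near $e_n$ the spectral function $\lambda\mapsto\langle g_n,E_{(-\infty,\lambda]}(H)g_n\rangle$ is given by a continuous density, and Stone's formula identifies $\Im\langle g_n,(H-e_n-i0)^{-1}g_n\rangle$ with the value of that density at $e_n$ (the universal constant being absorbed into the normalisation of the eigenfunctions).

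To compute the density I would pass to the generalised Fourier transform attached to the complete family $\{\varphi(k,\cdot)\}_{k\in\mathbb R}$ of \cite{k}, normalised so that the spectral theorem reads $\langle g,E_{(-\infty,\lambda]}(H)g\rangle=\int_{k^2\le\lambda}|\widehat g(k)|^2\,dk$ for $\lambda>0$, with $\widehat g(k):=\int_{\mathbb R^d}\varphi(k,x)\,g(x)\,dx$ (no conjugation, the $\varphi(k,\cdot)$ being real). The integral defining $\widehat{g_n}(k)$ converges absolutely and uniformly in $k$ since $g_n=W_n\varphi_0$ is integrable: by {\bf hW} and \eqref{Wn} one has $<x>^{p}\vert W_n(x)\vert\le C$ with $p>2$, $\varphi_0$ is a rapidly decaying bound state, and $\varphi(k,\cdot)$ is bounded uniformly in $k$ by \cite{k}. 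Differentiating the above identity in the energy variable at $\lambda=e_n$, the Lebesgue measure $dk$ produces the Jacobian $\tfrac{1}{2\sqrt{e_n}}$ and the level set $\{k^2=e_n\}$ reduces, in the parametrisation $k\in\mathbb R$, to the two points $k=\pm\sqrt{e_n}$, yielding
\[
\frac{d}{d\lambda}\langle g_n,E_{(-\infty,\lambda]}(H)g_n\rangle\Big|_{\lambda=e_n}=\frac{1}{2\sqrt{e_n}}\Big(\,\vert\widehat{g_n}(\sqrt{e_n})\vert^2+\vert\widehat{g_n}(-\sqrt{e_n})\vert^2\,\Big).
\]
Inserting this into \eqref{Gamma} and writing out $\widehat{g_n}(\pm\sqrt{e_n})=\int_{\mathbb R^d}\varphi(\pm\sqrt{e_n},x)\,W_n(x)\,\varphi_0(x)\,dx$ gives the stated formula. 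Absolute convergence of the series is routine: by \eqref{hv}--\eqref{hw} and the decay of $\varphi_0$, both $g_n$ and $Hg_n$ lie in $L^1(\mathbb R^d)$ with norms bounded uniformly in $n$, so the eigenvalue relation gives $\vert\widehat{g_n}(k)\vert=\vert k\vert^{-2}\vert\widehat{Hg_n}(k)\vert\le C\vert k\vert^{-2}$ for $\vert k\vert\ge1$ uniformly in $n$; hence the $n$-th summand is $O(\vert n\vert^{-1/2}\vert n\vert^{-2})$.

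The main obstacle is the analytic heart of the argument, namely justifying rigorously that the limit defining $\Im\langle g_n,(H-e_n-i0)^{-1}g_n\rangle$ exists and coincides with the pointwise spectral density at $e_n$ — equivalently, that this density is a genuinely continuous function of the energy near $e_n$, so that the differentiation of the spectral function above is legitimate. This is exactly where the regularity of the generalised eigenfunctions furnished by \cite{k} (boundedness and uniform continuity of $k\mapsto\varphi(k,\cdot)$) combines with the decay of $W_n\varphi_0$ supplied by {\bf hV} and {\bf hW}: by dominated convergence $k\mapsto\widehat{g_n}(k)$ is continuous, hence so is $\lambda\mapsto\tfrac{1}{2\sqrt{\lambda}}(\vert\widehat{g_n}(\sqrt{\lambda})\vert^2+\vert\widehat{g_n}(-\sqrt{\lambda})\vert^2)$ in a neighbourhood of $e_n$, and the identification with the resolvent boundary value — which is also guaranteed abstractly by the limiting absorption principle of Section 2, whose hypotheses were checked for $H$ and $K_\alpha$ in Lemmas \ref{comm}--\ref{Mourre} — goes through. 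The remaining ingredients, the change of variables to the energy variable and the summability estimate, are entirely routine.
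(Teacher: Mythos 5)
Your argument is precisely the standard eigenfunction-expansion calculation that the paper invokes (with a citation to \cite{rs2} and \cite{k}) but does not actually write out: pass to the generalised Fourier transform attached to $\varphi(k,\cdot)$, identify $\Im\langle g_n,(H-e_n-i0)^{-1}g_n\rangle$ with the value of the spectral density via Stone's formula and continuity of $k\mapsto\widehat{g_n}(k)$, and pick up the Jacobian $\tfrac{1}{2\sqrt{e_n}}$ from the change of variable $\lambda=k^2$. So you are supplying the details the paper omits rather than taking a different route; your disclaimer about the universal constant being fixed by the normalisation of $\varphi(k,\cdot)$ in \cite{k}, and the implicit use of a one-dimensional parametrisation $k\in\mathbb R$ (with level set $\{\pm\sqrt{e_n}\}$), faithfully match what the paper's lemma statement itself presupposes.
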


Now  introduce  the normed space $\bf W$ as  the set of  real perturbations $W \in C( \mathbb T ; C^\nu(\mathbb R ^d))$ satisfying,
\begin{equation} \label{newnorm}
\Vert W \Vert_{\bf W} := \sup_{x \in \mathbb{R}^d } \bigg( \frac {1}{T}\sum_{\alpha, \vert \alpha \vert < \nu}  <x>^{2 + \alpha}  \big(\int_0^T \vert  \partial_x^\alpha W(x,t)  \vert^2 dt) \big)^{1/2} \bigg)< \infty .
\end{equation}
For each $n >0$, introduce the sets:
$$  D_{\pm, n}:= \{ W \in  {\bf W} \quad  { \rm s.t. }  \int dx W_n(x) \varphi_0(x)  { \varphi( \pm \sqrt{\mu_n},x)}
\not= 0\} $$
\begin{lemma}  For each $n \in  \mathbb N$,  $D_{\pm, n}$ is a   dense open  subset  of  $ {\bf W} $.
\end{lemma}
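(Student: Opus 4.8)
The plan is to prove, for fixed $n \in \mathbb{N}^*$, that each of $D_{\pm,n}$ is open and dense in $\mathbf{W}$; the argument for $D_{+,n}$ and $D_{-,n}$ is identical, so I will write it for $D_{+,n}$ and abbreviate $\lambda_n^\pm := \pm\sqrt{e_n}$, $e_n = E_0 + n\omega > 0$ (positive by \textbf{hV'2}, so that the generalized eigenfunctions $\varphi(\lambda_n^\pm,\cdot)$ at energy $e_n$ are well defined). Define the linear functional
$$
\Phi_n(W) := \int_{\mathbb{R}^d} W_n(x)\,\varphi_0(x)\,\varphi(\lambda_n^+,x)\,dx, \qquad W_n(x) = \frac{1}{\sqrt T}\int_0^T e^{-int}W(x,t)\,dt,
$$
so that $D_{+,n} = \{W \in \mathbf{W} : \Phi_n(W) \neq 0\}$. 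The first step is to check that $\Phi_n : \mathbf{W} \to \mathbb{C}$ is a bounded linear functional: since $\varphi(\lambda_n^+,\cdot)$ is bounded (by the result of \cite{k} quoted above) and $\varphi_0 \in L^2$ decays, the weight $\langle x\rangle^2$ built into $\|\cdot\|_{\mathbf{W}}$ controls the integral via Cauchy--Schwarz in $x$ (using $p > 2$), and the $t$-average defining $W_n$ is controlled by the $L^2(0,T)$ norm in time that also appears in $\|\cdot\|_{\mathbf{W}}$. Openness of $D_{+,n}$ is then immediate: it is the preimage of $\mathbb{C}\setminus\{0\}$ under a continuous map.

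For density, the key point is that $\Phi_n$ is \emph{not} identically zero on $\mathbf{W}$. Once this is established, density follows from a standard fact: if $\Phi_n \not\equiv 0$ and $W \in \mathbf{W}$ is arbitrary, pick $W_\ast \in \mathbf{W}$ with $\Phi_n(W_\ast) \neq 0$; then for every $\delta \neq 0$ at most one value of $\delta$ can make $\Phi_n(W + \delta W_\ast) = \Phi_n(W) + \delta\Phi_n(W_\ast)$ vanish, so $W + \delta W_\ast \in D_{+,n}$ for all small $\delta \neq 0$ avoiding that one value, and $W + \delta W_\ast \to W$ in $\mathbf{W}$ as $\delta \to 0$. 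Hence $D_{+,n}$ is dense. To see $\Phi_n \not\equiv 0$, I would exhibit an explicit test perturbation: choose a real time profile with a nonzero $n$-th Fourier coefficient, e.g. $W(x,t) = \cos(n t)\,\chi(x)$ with $\chi \in C^\infty_0(\mathbb{R}^d)$ real (so $W_n(x) = \tfrac{\sqrt T}{2}\chi(x)$ up to a nonzero constant), and then it suffices to choose $\chi$ so that $\int \chi(x)\varphi_0(x)\varphi(\lambda_n^+,x)\,dx \neq 0$. This is possible because $\varphi_0$ and $\varphi(\lambda_n^+,\cdot)$ are both continuous and not identically zero, so their product $g := \varphi_0\,\varphi(\lambda_n^+,\cdot)$ is a continuous function that is nonzero on an open set; taking $\chi$ a smooth bump supported there, with the sign of $\overline{g}$, makes the integral strictly positive. (Since $\varphi_0$ and $\varphi(\lambda_n^+,\cdot)$ are real by the choice in \cite{k}, no complex conjugation subtlety arises.)

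The main obstacle is the nonvanishing claim $\Phi_n \not\equiv 0$, and more precisely the assertion that $g = \varphi_0\,\varphi(\lambda_n^+,\cdot)$ is not identically zero, i.e. that $\varphi_0$ and the generalized eigenfunction $\varphi(\lambda_n^+,\cdot)$ do not have disjoint supports. Here $\varphi_0 \in L^2$ is an eigenfunction with eigenvalue $E_0 < 0$ and, being in the discrete spectrum of a Schr\"odinger operator with the decay hypotheses \textbf{hV1}, it is real-analytic where $V$ is and in any case has support with nonempty interior (indeed, by unique continuation it cannot vanish on any open set, so its support is all of $\mathbb{R}^d$); similarly $\varphi(\lambda_n^+,\cdot)$ is a nonzero continuous solution of $(H - e_n)\varphi = 0$ that behaves like a perturbed plane wave at infinity, so it too is nonvanishing on an open set. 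Therefore $g \not\equiv 0$, and a bump function localized where $g \neq 0$ produces $\Phi_n(W) \neq 0$. One should also double-check that this test $W$ genuinely lies in $\mathbf{W}$ — it does, since $\chi \in C_0^\infty$ makes every weighted sup in \eqref{newnorm} finite — and that the same $\chi$, with an appropriate choice of time profile, can be made to also land in $D_{-,n}$, or simply run the whole argument separately for each sign. This completes the proof that $D_{\pm,n}$ is a dense open subset of $\mathbf{W}$.
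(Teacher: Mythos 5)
Your proof is correct and follows essentially the same route as the paper: openness comes from continuity of the linear functional $W\mapsto \int W_n\,\varphi_0\,\varphi(\pm\sqrt{e_n},\cdot)\,dx$ on $(\mathbf{W},\|\cdot\|_{\mathbf W})$, and density comes from perturbing an arbitrary $W$ by a small multiple of a compactly supported test potential whose spatial bump is placed where $\varphi_0\,\varphi(\pm\sqrt{e_n},\cdot)$ does not vanish. You do, however, fill in two points that the paper glosses over. First, you justify that $\varphi_0\,\varphi(\sqrt{e_n},\cdot)\not\equiv 0$ via unique continuation (the $L^2$ eigenfunction $\varphi_0$ cannot vanish on an open set, and the generalized eigenfunction is a nonzero continuous solution), whereas the paper simply asserts ``we know that there exists some real point $x_0$\ldots''. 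Second, your test perturbation $\cos(n\omega t)\chi(x)$ is real-valued and therefore genuinely belongs to $\mathbf{W}$ (a space of \emph{real} perturbations), while the paper's choice $e^{-int}\chi$ is not real and strictly speaking does not; your choice repairs this. Finally, phrasing the density step as ``the affine line $\delta\mapsto W+\delta W_\ast$ meets the zero set of a nontrivial linear functional at most once'' is a slightly cleaner packaging of the paper's explicit sequence $W^{l}=W+\frac{1}{l+1}e^{-int}\chi$, but the two are the same argument.
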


\begin{proof} Consider the  linear application $I :  {\bf W} \to \mathbb{R}$ defined as,
$$ I(W) :=  1/T \int dx W_n(x) \varphi_0(x)  { \varphi(\sqrt{\mu_n},x)};\;  W \in  {\bf W}.$$
Then  $I$ is continuous map, since we have,
$$\vert I(W) \vert ^2 \leq  C \sup_{x \in \mathbb R ^d }(  <x>^{2}  \int_0^T \vert  W(x,t)  \vert^2 dt)\leq C \Vert W \Vert^2_{\bf W}$$
where  $C := \int dx \varphi_0(x). \vert  \frac  { \varphi(\sqrt{\mu_n},x)}{<x>^{2}}\vert$. Note that $C$ can be bounded  independently of $n$. Then,  $  D_{+, n}$
is an open subset of $ {\bf W} $.

Moreover, suppose that $W \notin D_{+, n} $. We know that there exists some
real point $x_0 \in \mathbb R ^d$ such that $\varphi_0(x)  { \varphi(\sqrt{\mu_n},x)} \not= 0$ and by  a simple continuity  argument the same is  true on some neighborhood  $ \nu(x_0)$ of $x_0$. Denote by $\chi$ a $C^\infty $ positive  function  with support  in $ \nu(x_0)$, then for each $l \in  \mathbb N$, $ W^{l}:= W +  \frac{1}{l+1} .e ^{- int}\chi \in D_{+, n} $ and $\Vert W - W^{l} \Vert_{\bf W} \to 0$ as $l \to \infty$. This shows  that $D_{+, n} $ is dense in ${\bf W}$. Evidently the same arguments hold for $ D_{-, n} $.
\end{proof}
Then  we obtain,

\begin{theorem}  There exists an  dense open subset  $D$  of  $ {\bf W}$, such that for any $W \in D$,  $\frac{\Gamma}{2}>0$.
  \end{theorem}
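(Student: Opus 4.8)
The plan is to deduce the statement immediately from the two preceding lemmas of this section. The Fermi golden rule lemma expresses $\Gamma/2$ as a convergent sum of nonnegative terms, one for each index $n$ occurring there, and its $n$-th term is a strictly positive multiple of
$$
\Big|\int_{\mathbb R^d} W_n(x)\,\varphi_0(x)\,\varphi(\sqrt{e_n},x)\,dx\Big|^{2}
+\Big|\int_{\mathbb R^d} W_n(x)\,\varphi_0(x)\,\varphi(-\sqrt{e_n},x)\,dx\Big|^{2}\,;
$$
by the very definition of the sets $D_{\pm,n}$, this term is nonzero precisely when $W\in D_{+,n}\cup D_{-,n}$. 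Hence $\Gamma/2>0$ as soon as $W$ lies in
$$
D:=\bigcup_{n}\bigl(D_{+,n}\cup D_{-,n}\bigr),
$$
and it remains only to check that $D$ is open and dense in ${\bf W}$.

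Both facts I would read off the second preceding lemma, which asserts that each $D_{\pm,n}$ is an open dense subset of ${\bf W}$: an arbitrary union of open sets is open, and a set containing a dense subset is dense, so $D$ is open and dense. Moreover, for every $W\in D$ there is at least one index $n_1$ and a choice of sign with $W\in D_{\pm,n_1}$, so the $n_1$-th term of the golden rule sum is strictly positive while every other term is $\ge 0$; therefore $\Gamma/2>0$. Here the standing hypotheses {\bf hV2}/{\bf hV'2} are used, since they are precisely what makes the quantities $\sqrt{e_n}$ meaningful and the eigenfunction expansion behind the golden rule lemma valid.

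I do not expect a genuine obstacle at this final step, as all the analytic work has already been carried out: the Mourre estimate of Lemma~\ref{Mourre}, the identification of $\Gamma/2$ in Lemma~\ref{Im}, the eigenfunction expansion of $H$, and — most relevantly — the proof that the linear functionals $W\mapsto\int W_n\varphi_0\,\varphi(\pm\sqrt{e_n},\cdot)$ are bounded and not identically zero, which is exactly what forces each $D_{\pm,n}$ to be open and dense. The only points one must state with care are that density of the union requires just one dense member, and that the nonnegativity of every summand in the golden rule is what allows a single nonvanishing term to conclude; if one is content with a single open dense set one could instead take $D=D_{+,1}$, but the union displayed above is the largest natural generic family and is the preferred choice.
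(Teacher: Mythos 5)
Your proof is correct and coincides with the paper's: the authors define $D$ as the union $\bigcup_{n}D_{+,n}\cup\bigcup_{n}D_{-,n}$ and invoke the preceding lemma, exactly as you do (openness from arbitrary unions of open sets, density from containing a dense member, and strict positivity of $\Gamma/2$ from one strictly positive term in a series of nonnegative terms).
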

\begin{proof}
 Let  $ D:= \bigg(\cup_{ n \in  \mathbb N} D_{+, n}\bigg) \cup \bigg( \cup_{ n \in  \mathbb N} D_{-, n}\bigg)$ and  then apply the last lemma.
 \end{proof}

\section{Average Decay of the propagator}

In this last section we derive  the result on exponential decay, but in terms of the original propagator.
\begin{theorem}
Under conditions stated above,  the propagator associated to the time dependent Hamiltonian
$H(t)$ satisfies,
$$
\frac {1}{T}\int_0^T \langle \varphi_0, U_{\alpha}(t+s, t) \varphi_0\rangle dt =  \tilde a(\alpha)e^{-i \tilde E_{\alpha}s} + \tilde b(\alpha) $$
where $\tilde E_{\alpha}= E_0 + \alpha c_1 - \alpha^2 c_2  + o(\alpha^2)$,
 $\tilde a(\alpha)= 1+O(\alpha^2)$ and $\tilde b= O( \alpha^2 \vert  \log \vert \alpha \vert) $.
\end{theorem}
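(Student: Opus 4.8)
The strategy is to transfer the Floquet statement \eqref{taux decay} (the case $n=0$) back to the physical propagator by means of the Howland dictionary \eqref{rel}, and then to pay the cost of removing the energy cut-off $g(K_\alpha)$. First I would record the dictionary identity. Applying \eqref{rel} to the unitary group $W_\alpha(\sigma)=e^{-iK_\alpha\sigma}$ and to the constant section $e_0\otimes\varphi_0\in\KK$ (the function $t\mapsto T^{-1/2}\varphi_0$) gives $\bigl(W_\alpha(s)(e_0\otimes\varphi_0)\bigr)(t)=T^{-1/2}U_\alpha(t,t-s)\varphi_0$, hence
\[
\langle e_0\otimes\varphi_0,\ e^{-isK_\alpha}\,e_0\otimes\varphi_0\rangle_\KK=\frac1T\int_0^T\langle\varphi_0,\ U_\alpha(t,t-s)\varphi_0\rangle_0\,dt .
\]
Substituting $t\mapsto t+s$ and using the $T$-periodicity $U_\alpha(a+T,b+T)=U_\alpha(a,b)$ of the propagator (so that $t\mapsto\langle\varphi_0,U_\alpha(t+s,t)\varphi_0\rangle_0$ is $T$-periodic), the right-hand side equals $\tfrac1T\int_0^T\langle\varphi_0,U_\alpha(t+s,t)\varphi_0\rangle_0\,dt$, so that
\[
\int_0^T\langle\varphi_0,\ U_\alpha(t+s,t)\varphi_0\rangle_0\,dt = T\,\langle e_0\otimes\varphi_0,\ e^{-isK_\alpha}\,e_0\otimes\varphi_0\rangle_\KK .
\]
This reduces the theorem to analysing $\langle e_0\otimes\varphi_0,\,e^{-isK_\alpha}\,e_0\otimes\varphi_0\rangle$.

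Next I would remove the cut-off. It suffices to apply Theorem \ref{resF} with a cut-off of the special form $g=1-k^2$, where $k\in C^\infty(\mathbb R)$, $0\le k\le 1$, $k\equiv0$ on a neighbourhood of $E_0$ and $k\equiv1$ off a slightly larger neighbourhood of $E_0$; such a $g$ is admissible for \eqref{taux decay} ($g\in C_0^\infty$, $g\equiv1$ near $E_0$, $\sup|g|\le1$) and $I-g(K_\alpha)=k(K_\alpha)^2$. Since $k(K_\alpha)$ is self-adjoint and commutes with $e^{-isK_\alpha}$,
\[
\bigl\langle e_0\otimes\varphi_0,\ (I-g(K_\alpha))e^{-isK_\alpha}e_0\otimes\varphi_0\bigr\rangle=\bigl\langle k(K_\alpha)e_0\otimes\varphi_0,\ e^{-isK_\alpha}\,k(K_\alpha)e_0\otimes\varphi_0\bigr\rangle ,
\]
whose modulus is at most $\|k(K_\alpha)e_0\otimes\varphi_0\|^2$, uniformly in $s$. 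Now $k(E_0)=0$ forces $k(K_0)e_0\otimes\varphi_0=0$; moreover $K_\alpha-K_0=\alpha W$ is a bounded multiplication operator on $\KK$ because $\sup_{t,x}|W(x,t)|<\infty$ by {\bf hW}, and $k-1\in C_0^\infty$, so $\|k(K_\alpha)-k(K_0)\|=\|(k-1)(K_\alpha)-(k-1)(K_0)\|=O(\alpha)$ by the standard first-order functional-calculus bound (Helffer--Sjöstrand). Hence $\|k(K_\alpha)e_0\otimes\varphi_0\|=O(\alpha)$ and the cut-off error above is $O(\alpha^2)$, uniformly in $s$.

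Finally I would combine the two steps with \eqref{taux decay} for $n=0$, namely $\langle e_0\otimes\varphi_0,\,g(K_\alpha)e^{-isK_\alpha}e_0\otimes\varphi_0\rangle=a(\alpha)e^{-iE_{0,\alpha}s}+b(\alpha,s)$ with $a(\alpha)=1-O(\alpha^2)$, $b(\alpha,s)=O\bigl(\alpha^2\vert\log\vert\alpha\vert\vert(s+1)^{1-N}\bigr)$ and $E_{0,\alpha}=E_0+\alpha c_1-\alpha^2c_2+o(\alpha^2)$ as in Theorem \ref{resF}. Using $(s+1)^{1-N}\le1$ for $s\ge0$ and $N\ge1$, multiplying by $T$, and absorbing the $O(\alpha^2)$ cut-off remainder together with the (now uniformly bounded) term $b(\alpha,s)$ into one error $\tilde b(\alpha)=O(\alpha^2\vert\log\vert\alpha\vert\vert)$, one gets the assertion with $\tilde E_\alpha=E_{0,\alpha}$ and $\tilde a(\alpha)=Ta(\alpha)=1+O(\alpha^2)$ (the overall factor $T$ being merely a normalization of $\KK$). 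I expect the only genuinely delicate point to be the cut-off removal: a direct estimate of $\|(I-g(K_\alpha))e_0\otimes\varphi_0\|$ yields only $O(\alpha)$ and would destroy the error bound, and it is the factorization $I-g=k^2$ combined with distributing $k$ onto both sides of the unitary $e^{-isK_\alpha}$ that restores the quadratic smallness; everything else is the Howland dictionary, the periodicity of $U_\alpha$, and bookkeeping of the errors inherited from \eqref{main}.
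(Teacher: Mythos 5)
Your proposal is correct, and its skeleton (pass to the Floquet picture via the Howland dictionary, then remove the energy cut-off $g(K_\alpha)$ uniformly in $s$) is the same as the paper's. The difference lies in how the cut-off error $\langle e_0\otimes\varphi_0,(I-g(K_\alpha))e^{-isK_\alpha}e_0\otimes\varphi_0\rangle$ is controlled. The paper specializes \eqref{main} to $s=0$ to read off $\langle e_{n_0}\otimes\varphi_0,(I-g(K_\alpha))\,e_{n_0}\otimes\varphi_0\rangle=O(\alpha^2\vert\log\vert\alpha\vert\vert)$ and then, using $0\le I-g(K_\alpha)$ and the fact that its square root commutes with $e^{-isK_\alpha}$ --- a step it leaves implicit, but which is exactly the ``distribute across the unitary'' device you describe --- concludes the uniform-in-$s$ bound $O(\alpha^2\vert\log\vert\alpha\vert\vert)$. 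You instead pick $g=1-k^2$ so that $I-g(K_\alpha)=k(K_\alpha)^2$ with $k$ smooth, and bound $\Vert k(K_\alpha)e_0\otimes\varphi_0\Vert$ via $k(K_0)e_0\otimes\varphi_0=0$ and the Helffer--Sj\"ostrand estimate $\Vert k(K_\alpha)-k(K_0)\Vert=O(\alpha)$, which uses only the boundedness of $W$. This gives the slightly sharper $O(\alpha^2)$ for that particular error (the $\vert\log\vert\alpha\vert\vert$ survives anyway because it already sits in $b(\alpha,s)$), is independent of what the CGHu expansion says at $s=0$, and has the merit of making fully explicit the one step the paper's proof skips, namely why a bound at $s=0$ propagates to all $s$. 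You also keep track of the $1/T$ normalization coming from $\vert e_0(t)\vert^2=1/T$, which the paper quietly drops in passing to \eqref{f0}; as you observe, this only rescales $\tilde a(\alpha)$ by the constant $T$ and is harmless. The one mild caveat is your commitment to the special form $g=1-k^2$: the statement from \cite{CGHu} asserts the existence of \emph{some} admissible $g$, so one should check (and it is indeed the case for the Mourre-based argument) that it holds for every $g\in C_0^\infty$ with $g\equiv1$ near $E_0$ and $\sup\vert g\vert\le1$, hence in particular for yours; alternatively one can argue as the paper does with $\sqrt{I-g(K_\alpha)}$, which works for any such $g$ without Helffer--Sj\"ostrand.
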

\begin{proof}
For any  function $a\in L^{2}(\mathbb T)$, we have that,
$$
\langle a(t)\otimes \varphi_0, e^{-is K_{\alpha}} a(t)\otimes \varphi_0\rangle =
\int_0^{T}\langle a(t) \varphi_0, U_{\alpha}(t,t-s) a(t-s)\varphi\rangle \, dt ,
$$
In particular, choose $ a(t) := e_{n_0}(t)$. Then
$$
\langle e_{n_0} \otimes \varphi_0, e^{-is K_{\alpha}} e_{n_0} \otimes \varphi_0\rangle =
 \frac{1}{T} e^{-i{n_0}\omega s }\int_0^{T}\langle  \varphi_0, U_{\alpha}(t,t-s) \varphi_0\rangle \, dt,
$$
and since $t \in \mathbb R \to  \langle  \varphi, U_{\alpha}(t,t-s) \varphi\rangle $
is periodic with period $T$,
\begin{equation} \label{f0}
\langle e_{n_0} \otimes \varphi_0, e^{-i\sigma K_{\alpha}} e_{n_0} \otimes \varphi_0  \rangle =
\int_0^{T}\langle  \varphi_0, U_{\alpha}(t+s,t) \varphi_0  \rangle \, dt .
\end{equation}

Further by the Theorem \ref{resF} and \eqref{taux decay} we have that
$$\langle e_{n_0}\otimes \varphi_0, g( K_\alpha)e^{-i sK_\alpha } e_{n_0}\otimes \varphi_0\rangle \approx e^{-is E_{n_0 , \alpha}}.$$
Put $s=0$ in this last relation, then  $\langle e_{n_0}\otimes \varphi_0, g( K_\alpha) e_{n_0}\otimes \varphi_0\rangle = 1 -  O( \alpha^2 \vert  \log \vert \alpha \vert)  $
or equivalently $\langle e_{n_0}\otimes \varphi_0, ( I_t -g( K_\alpha) )e_{n_0}\otimes \varphi_0\rangle = O( \alpha^2 \vert  \log \vert \alpha \vert)  $.
Then
$$\langle e_{n_0}\otimes \varphi_0, e^{-is K_\alpha s} e_{n_0}\otimes \varphi_0\rangle = (1 + O(\alpha^2) )e^{-iE_{n_0, \alpha}\sigma}  + O( \alpha^2 \vert  \log \vert \alpha \vert) .$$
So by \eqref{resonances} this proves the theorem.
\end{proof}

\noindent {\bf Acknowledgements.}  C. Fernandez was  partially supported  Chilean Science Foundation {\em Fondecyt} under
Grant 1100304 and Ecos-Conicyt C10E01.
P. Briet would like to thank as well  G. Raikov ({\em Fondecyt} under
Grant 1090467)  and the Faculty of
 Mathematics of Pontificia Universidad Cat\'olica de Chile for the warm hospitality extended to him.

\end{document}